\newtheorem{theorem}{Theorem}[section]
\newtheorem{proposition}[theorem]{Proposition}
\newtheorem{lemma}[theorem]{Lemma}
\newtheorem{example}[theorem]{Example}
\newtheorem{remark}[theorem]{Remark}
\newcommand{\C}{\mathbb{C}}
\newcommand{\SC}{\mathbb{SC}}
\newcommand{\Q}{\mathbb{Q}}
\newcommand{\N}{\mathbb{N}}
\newcommand{\R}{\mathbb{R}}
\newcommand{\K}{\mathcal{K}}
\newcommand{\F}{\mathcal{F}}
\newcommand{\G}{\mathcal{G}}
\newcommand{\I}{\mathbb{I}}
\newcommand{\notion}[1]{\emph{#1}}
\newcommand{\id}{\operatorname{id}}
\begin{document}
	
	\keywords{split interval,
		iterated function system, 
		attractor,
		non-metrizable space.}
	\subjclass[2010]{Primary: 28A80; Secondary: 47H09, 54H20}

	
	\title{
		Split square and split carpet as 
		examples of non-metrizable IFS attractors
	}
	
	\author{Krzysztof Le\'{s}niak}
	\address{Faculty of Mathematics and Computer Science, 
		Nicolaus Copernicus University in Toru\'{n}\\
		Chopina 12/18, 87-100 Toru\'{n}, Poland\\
		ORCID 0000-0001-5992-488X\quad\ E-mail: much@mat.umk.pl}
	
	\author{Magdalena Nowak}
	\address{Mathematics Department, Jan Kochanowski University in Kielce\\ 
		Uniwersytecka 7, 25-406 Kielce, Poland\\
		ORCID 0000-0003-1915-0001\quad\ E-mail: magdalena.nowak805@gmail.com }
	
	\maketitle

	\begin{abstract}
		We define two non-metrizable compact spaces 
		and show that they are attractors 
		of iterated function systems. 
		Both of them, the split square and the split carpet, 
		are constructed using the product of split intervals. 
	\end{abstract}

	\section{Introduction}
	
	The theory of iterated function systems (IFS) 
	defined on general topological spaces
	is fairly well developed,
	cf. \cite{BKNNS, Barrientos2017, Barrientos2020,
		FitzsimmonsKunze-June2020, Kameyama, Kieninger,
		McGehee, MiculescuMihail, Tetenov2017}. 
	In that regard, various concepts of attractors 
	have been introduced. However, 
	the examples of IFSs that 
	are defined on non-metrizable spaces 
	and admit an attractor 
	seem to be very scarce, e.g.,
	\cite[Example 6]{BLesR2016chaos}, 
	\cite[Example 3.8]{LSS-Manual}.
	In principle, to construct IFSs with non-metrizable 
	attractors, one could use point transitive dynamical 
	systems (in particular, minimal dynamical systems);
	see \cite[Example 2]{BLesR2018fractals} for 
	a specific construction which works in general. 
	However, no efficient characterization of spaces 
	admitting transitive (minimal) systems is known, 
	even in the case of metrizable spaces, and 
	the advance of the subject is quite scattered,
	see for instance \cite{BoCiFo2021}.
	Moreover, the amount of standard examples of 
	non-metrizable spaces that are compact and separable 
	(a necessary condition for an attractor of an IFS 
	\cite[Proposition 5]{BLesR2016chaos}) is rather small, 
	e.g., \cite{PiBase}.

	The aim of this work is to 
	provide two examples of IFSs 
	with non-metrizable attractors. 
	They are inspired by the IFS
	in \cite[Example 6]{BLesR2016chaos},
	which is acting on the split interval 	
	(also called double arrow space \cite{PiBase} 
	or two arrows space \cite{Engelking}).	

	In section 2 we present a construction 
	and properties of the first example, the split square. 
	We show that it is the attractor of 
	an iterated function system constructed in section 3. 
	The second example, called split carpet, is described 
	in the last section.
	
	\section{Split square}
	
	A \notion{split square} is the set 
	\begin{align*}
		\Q = [0,1)\times(0,1]\times\{1\}
		& \quad\cup\; (0,1]\times(0,1]\times\{2\}
		\\
		\cup\; (0,1]\times[0,1)\times\{3\}
		& \quad\cup\; [0,1)\times[0,1)\times\{4\}.
	\end{align*}
	We endow it with the topology 
	generated by the basis of sets
	\begin{align}
		\label{eq:base}
		Q(a,b;c,d) := [a,b)\times(c,d]\times\{1\}
		&\quad \cup\; (a,b]\times(c,d]\times\{2\}
		\\
		\notag
		\cup\; (a,b]\times[c,d)\times\{3\}
		& \quad\cup\; [a,b)\times[c,d)\times\{4\},
	\end{align}
	\(0\leq a<b\leq 1\), \(0\leq c < d\leq 1\);
	see Fig. \ref{fig:splitnbd}.
	It is customary to set
	\(Q(a,b;c,d) := \emptyset\) when 
	\(a\geq b\) or \(c\geq d\).
	One can easily see that \eqref{eq:base}
	forms a basis of some topology.
	Indeed, 
	\begin{align*}
		Q(a,b; c,d) \cap Q(a',b'; c',d') =
		Q(a\vee a', b\wedge b'; c\vee c', d\wedge d'),
	\end{align*}
	where 
	\(r\vee s:=\max(r,s)\), 
	\(r\wedge s:=\min(r,s)\).
	In particular, 
	the above intersection is empty 
	when \(a\vee a' \geq b\wedge b'\)
	or \(c\vee c' \geq d\wedge d'\).
	
	The basic open sets \eqref{eq:base} 
	are clopen thanks to the equality
	\begin{align*}
		\Q\setminus Q(a,b; c,d) = 
		Q(0,1; 0,c) \cup
		Q(0,1; d,1) \cup
		Q(0,a; 0,1) \cup
		Q(b,1; 0,1).
	\end{align*}
	Therefore, \(\Q\) is zero-dimensional; 
	see Theorem \ref{th:propertiesofQ} 
	for more properties of \(\Q\).
	
	It should be remarked that 
	a space nearly identical 
	to the split square was 
	considered by T. Banakh in \cite{Banakh} 
	under the same name. 
	Banakh's split square differs from \(\Q\) 
	by ``edges'' as it is a product of two 
	split intervals with uncut ``isolated corners'';
	see Proposition \ref{prop:QisI2}.
	Note also that each slice 
	\(\mathbb{Q}\cap\big( [0,1]^2\times\{t\}\big)\),
	\(t=1,2,3,4\), bears the topology induced 
	from the Sorgenfrey plane (a product of two 
	Sorgenfrey lines); see Fig. \ref{fig:splitnbd}.
	
	\begin{figure}
		\begin{center}
			\begin{tikzpicture}[line cap=round,line join=round,>=triangle 45,x=1.0cm,y=1.0cm, scale=0.8]
				\clip(0.8,-0.6) rectangle (3.73,5.37);
				\fill[line width=2.pt,color=black,fill=black,pattern=dots,pattern color=black] (2.,5.0) -- (1.6,4.0) -- (2.6,4.0) -- (3.,5.) -- cycle;
				\fill[line width=2.pt,color=black,fill=black,pattern=dots,pattern color=black] (2.0,3.6) -- (1.6,2.6) -- (2.6,2.6) -- (3.0,3.6) -- cycle;
				\fill[line width=2.pt,color=black,fill=black,pattern=dots,pattern color=black] (2.0,2.2) -- (3.0,2.2) -- (2.6,1.2) -- (1.6,1.2) -- cycle;
				\fill[line width=2.pt,fill=black,pattern=dots,pattern color=black] (2.0,0.8) -- (3.0,0.8) -- (2.6,-0.2) -- (1.6,-0.2) -- cycle;
				\draw [line width=2.pt,color=black] (2.,5.)-- (1.6,4.0);
				\draw [line width=2.pt,color=black] (1.6,4.0)-- (2.6,4.0);
				\draw [line width=2.pt,color=black] (1.6,2.6)-- (2.6,2.6);
				\draw [line width=2.pt,color=black] (2.6,2.6)-- (3.0,3.6);
				\draw [line width=2.pt,color=black] (2.0,2.2)-- (3.0,2.2);
				\draw [line width=2.pt,color=black] (3.0,2.2)-- (2.6,1.2);
				\draw [line width=2.pt] (2.0,0.8)-- (3.0,0.8);
				\draw [line width=2.pt] (1.6,-0.2)-- (2.0,0.8);
				\begin{scriptsize}
					\draw [color=black] (2.0,5.1) circle (2.5pt);
					\draw [fill=black] (1.6,4.0) circle (2.5pt);
					\draw [color=black] (2.7,4.0) circle (2.5pt);
					\draw [color=black] (1.5,2.6) circle (2.5pt);
					\draw [fill=black] (2.6,2.6) circle (2.5pt);
					\draw [color=black] (3.0,3.7) circle (2.5pt);
					
					\draw [color=black] (1.9,2.2) circle (2.5pt);
					\draw [fill=black] (3.0,2.2) circle (2.5pt);
					\draw [color=black] (2.6,1.1) circle (2.5pt);
					
					\draw [fill=black] (2.0,0.8) circle (2.5pt);
					\draw [color=black] (3.1,0.8) circle (2.5pt);
					\draw [color=black] (1.6,-0.3) circle (2.5pt);
				\end{scriptsize}
			\end{tikzpicture}
		\end{center}
		\caption{The shape of \(\Q\) and basic neighbourhoods.}
		\label{fig:splitnbd}
	\end{figure}
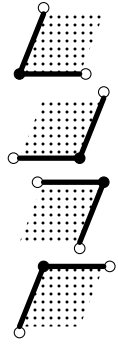

	What is crucial about \(\Q\) is that 
	it is the topological product of 
	two split intervals. Let 
	\(\I :=\Big( (0,1]\times\{0\} \Big)\;\cup\;\Big( [0,1)\times\{1\}\Big)\) 
	be the \notion{split interval}. 
	The topology on \(\I\) is generated 
	by a basis of sets of two forms 
	\begin{align*}
		I_0(a,b) := 
		\Big((a,b]\times\{0\} \Big)
		& \;\cup\; \Big((a,b)\times\{1\}\Big),
		\\
		I_1(a,b) := 
		\Big((a,b)\times\{0\}\Big)
		& \;\cup\; \Big([a,b)\times\{1\}\Big),
	\end{align*} 
	cf.
	\cite[Exercise 3.10.C chap.3 p. 212]{Engelking}.
	We find more convenient to 
	work with the basis formed by
	the scaled split intervals
	\begin{align*} 
		I(a,b) :=\Big( (a,b]\times\{0\} \Big)
		\;\cup\; \Big([a,b)\times\{1\}\Big),
	\end{align*}
	where \(0\leq a<b\leq 1\).
	To see the equivalence of the two bases,
	one just observes that
	$$I(a,b) = I_0(a,b)\cup I_1(a,b), I_0(a,b) = \bigcup_{n\in\N} I(a+(b-a)/n,b),
	I_1(a,b) = \bigcup_{n\in\N} I(a,b-(b-a)/n). $$

	\begin{proposition}\label{prop:QisI2}
		The split square $\Q$ is homeomorphic to $\I^2$, 
		the product of two split intervals.
	\end{proposition}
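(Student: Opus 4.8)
The plan is to write down an explicit bijection $\varphi\colon\Q\to\I^2$ dictated by the coordinate-by-coordinate matching of the four ``corner labels'' $t\in\{1,2,3,4\}$ of $\Q$ with the two ``side labels'' $\{0,1\}$ of each factor $\I$. Recall that a point of $\I$ is a pair $(x,i)$ with $i=0$ forcing $x\in(0,1]$ and $i=1$ forcing $x\in[0,1)$; comparing the domains of the four pieces of $\Q$ with those of $\I$, the $x$-coordinate of a point of $\Q$ behaves like side $1$ precisely when $t\in\{1,4\}$ and like side $0$ when $t\in\{2,3\}$, whereas the $y$-coordinate behaves like side $0$ when $t\in\{1,2\}$ and like side $1$ when $t\in\{3,4\}$. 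This suggests setting
\[
  \varphi(x,y,t) := \bigl((x,\alpha(t)),(y,\beta(t))\bigr),
\]
where $\alpha(t)=1$ for $t\in\{1,4\}$, $\alpha(t)=0$ for $t\in\{2,3\}$, and $\beta(t)=0$ for $t\in\{1,2\}$, $\beta(t)=1$ for $t\in\{3,4\}$.

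First I would check that $\varphi$ is a well-defined bijection. Well-definedness amounts to verifying, piece by piece, that the restrictions on $x$ and $y$ imposed in each of the four summands of $\Q$ coincide exactly with the restrictions that the labels $\alpha(t),\beta(t)$ impose in $\I$ (e.g.\ for $t=1$ we have $x\in[0,1)$, matching side $\alpha(1)=1$, and $y\in(0,1]$, matching side $\beta(1)=0$). For bijectivity I would exhibit the inverse, which reads off $t$ from the label pair via $(1,0)\mapsto1$, $(0,0)\mapsto2$, $(0,1)\mapsto3$, $(1,1)\mapsto4$, and then checks the same domain compatibility in reverse; since the four label pairs exhaust $\{0,1\}^2$ and the domain conditions match, $\varphi$ is a bijection.

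The heart of the argument is to show that $\varphi$ carries the basis of $\Q$ onto the product basis of $\I^2$, namely
\[
  \varphi\bigl(Q(a,b;c,d)\bigr) = I(a,b)\times I(c,d)
  \qquad(0\le a<b\le1,\ 0\le c<d\le1).
\]
This is a direct computation: expanding $I(a,b)\times I(c,d)$ into its four label combinations $(\alpha,\beta)\in\{0,1\}^2$ produces exactly the four summands of $Q(a,b;c,d)$ under the correspondence above (for instance the combination $(0,1)$ gives $x\in(a,b]$, $y\in[c,d)$, i.e.\ the $t=3$ summand). Because the sets $I(a,b)\times I(c,d)$ constitute a basis for the product topology on $\I^2$ while the sets $Q(a,b;c,d)$ constitute a basis for $\Q$, a bijection matching one basis with the other is automatically a homeomorphism: preimages and images of basic open sets are basic open, so both $\varphi$ and $\varphi^{-1}$ are continuous.

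The only real obstacle I anticipate is bookkeeping rather than conceptual: one must fix the orientation convention for the labels $0,1$ of $\I$ and then be scrupulous about the half-open/closed boundary behaviour at the four corners $x,y\in\{0,1\}$, so that the bijection is correct on the nose (not merely up to a countable discrepancy). Once the label table $t\leftrightarrow(\alpha(t),\beta(t))$ is pinned down by comparing domains, the remaining verifications are the routine set manipulations indicated above, and no compactness or Hausdorff input is needed because the basis-to-basis matching already yields bicontinuity.
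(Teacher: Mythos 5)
Your proposal is correct and coincides with the paper's own proof: your map $\varphi$ is exactly the homeomorphism $h$ of \eqref{eq:homeo} (the label table $t\leftrightarrow(\alpha(t),\beta(t))$ reproduces the four cases), and your key identity $\varphi(Q(a,b;c,d))=I(a,b)\times I(c,d)$ is precisely \eqref{eq:hQIxI}. Nothing further is needed.
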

	
	\begin{proof}
		Let us consider the product
		\begin{align*}	
			\I^2 
			&= (0,1]\times\{0\}\times(0,1]\times\{0\}
			\\
			\mbox{} 
			&\cup\; (0,1]\times\{0\}\times[0,1)\times\{1\}
			\\
			\mbox{}
			&\cup\; [0,1)\times\{1\}\times(0,1]\times\{0\}
			\\
			\mbox{}
			&\cup\; [0,1)\times\{1\}\times[0,1)\times\{1\}.
		\end{align*}
		Define \(h\colon \Q\to \I^2\) by 
		\begin{align}
			\label{eq:homeo}
			h(x,y,t) :=
			\begin{cases}
				(x,1,y,0) & \text{ when }t=1,\\
				(x,0,y,0) & \text{ when }t=2,\\
				(x,0,y,1) & \text{ when }t=3,\\
				(x,1,y,1) & \text{ when }t=4,
			\end{cases}
		\end{align}
		for \((x,y,t)\in\Q\).
		The map \(h\) is a homeomorphism.
		Indeed, \(h\) is bijective, and 
		\begin{align}
			\label{eq:hQIxI}
			h(Q(a,b;c,d)) = I(a,b)\times I(c,d)
		\end{align}
		for \(a<b, c<d\) in \([0,1]\). 
	\end{proof}

	Basic properties of \(\Q\) are gathered below.
	\begin{theorem}\label{th:propertiesofQ}
		The split square \(\Q\) is a non-metrizable
		zero-dimensional compact separable Hausdorff space.
	\end{theorem}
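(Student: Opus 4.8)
The plan is to derive every clause of the theorem from the homeomorphism $\Q\cong\I^2$ provided by Proposition \ref{prop:QisI2}, so that each property is checked on the split interval $\I$ and then transported across the finite product. Zero-dimensionality needs no further work: it was already recorded before the statement, the sets $Q(a,b;c,d)$ (equivalently $I(a,b)$) being clopen, and a finite product of zero-dimensional spaces is again zero-dimensional.

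For the Hausdorff property I would argue directly from the clopen basis. Given distinct points of $\I$, one exhibits a basic set $I(a,b)$ containing exactly one of them --- a short case check according to whether the points lie on the same ``level'' --- whereupon $I(a,b)$ and its clopen complement separate them; Hausdorffness then passes to $\I^2$ since a product of Hausdorff spaces is Hausdorff. For separability I would take the countable set of points of $\I$ whose first coordinate is rational; it meets every nonempty $I(a,b)$ because $(a,b]$ contains a rational, and a finite product of separable spaces is separable, giving separability of $\Q$. Compactness of $\I$ is the classical compactness of the split interval, cf. \cite[Exercise 3.10.C]{Engelking} (it may also be obtained directly, e.g.\ by viewing $\I$ as a complete linearly ordered space under the lexicographic order, or by reducing an arbitrary cover by basic sets to a finite subcover); compactness of $\Q$ then follows from the Tychonoff theorem applied to $\I^2$.

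The one genuinely substantive clause, and the step I expect to be the main obstacle, is non-metrizability. Here the plan is to invoke the characterization that a compact Hausdorff space is metrizable precisely when it is second countable, and then to prove that $\I$ --- hence $\Q$ --- is not second countable. The key observation is that for each $x\in(0,1)$ the basic clopen set $I(x,1)$ contains the point $(x,1)$ but not $(x,0)$. Given any base $\mathcal{B}$ of $\I$, choose $B_x\in\mathcal{B}$ with $(x,1)\in B_x\subseteq I(x,1)$. If $x<x'$ then $(x,1)\notin I(x',1)\supseteq B_{x'}$ while $(x,1)\in B_x$, so $B_x\neq B_{x'}$; thus $x\mapsto B_x$ is injective on the uncountable set $(0,1)$ and $\mathcal{B}$ must be uncountable. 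Hence $\I$ is not second countable.

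Finally I would transport this across the homeomorphism. Since $\I$ is homeomorphic to the subspace $\I\times\{p\}$ of $\I^2\cong\Q$ and second countability is hereditary, $\Q$ cannot be second countable either. Being compact and Hausdorff by the previous steps, $\Q$ is therefore not metrizable, completing the proof.
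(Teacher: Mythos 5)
Your proposal is correct and follows exactly the paper's strategy: the paper's proof is a one-line reduction to Proposition \ref{prop:QisI2} together with the known properties of the split interval (citing Engelking), and you simply supply the details of those properties --- including a valid non-second-countability argument via the sets $I(x,1)$ --- that the paper delegates to the reference. No gaps; the argument that compact Hausdorff plus non-second-countable implies non-metrizable, and that second countability is inherited by the subspace $\I\times\{p\}$, is sound.
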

	\begin{proof}
		Follows from Proposition \ref{prop:QisI2}
		and properties of the split interval,
		cf. \cite{Engelking}. 
	\end{proof}

	We finish this section with 
	a lemma providing a wide class of 
	continuous functions on \(\I\) and \(\Q\).
	
	The product of two maps 
	\(f\colon X\to X\) and \(g\colon Y\to Y\)
	is denoted by 
	\(f\times g: X\times Y\to X\times Y\).
	Recall that 
	\(f\times g\,(x,y) :=(f(x),g(y))\)
	for \((x,y)\in X\times Y\). 
	The restriction of \(f\times g\) 
	to a subset of \(X\times Y\)
	will be also denoted by \(f\times g\). 
	If \(f\) and \(g\) are continuous and 
	\(X\times Y\) bears the product topology,
	then \(f\times g\) is continuous.
	The identity map will be denoted by \(\id\).
	
	\begin{lemma}\label{lem:continuity}
		Let \(g_1,g_2\colon [0,1] \to [0,1]\) 
		be increasing, continuous maps in the Euclidean sense. 
		\begin{enumerate}
			\item[(a)]
			The map \(g_1\times \id\) defined on \(\I\)
			is continuous in the topology of split interval.
			\item[(b)]
			The map \(g_1\times g_2\times \id\) 
			defined on \(\Q\) is continuous 
			in the topology of split square.
		\end{enumerate}
	\end{lemma}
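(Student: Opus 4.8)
The plan is to prove (a) by a direct computation with the basis $\{I(a,b)\}$ and then to obtain (b) from (a) by transporting everything to $\I^2$ through the homeomorphism $h$ of Proposition~\ref{prop:QisI2}; the only genuine work is in (a). For (a) I first note that $g_1\times\id$ does carry $\I$ into $\I$: since $g_1$ is increasing, $x\in(0,1]$ gives $g_1(x)>g_1(0)\geq 0$, so $g_1(x)\in(0,1]$, while $x\in[0,1)$ gives $g_1(x)<g_1(1)\leq 1$, so $g_1(x)\in[0,1)$. To check continuity it then suffices to show that the preimage of every basic set $I(a,b)$ is open. Unwinding definitions,
\[
(g_1\times\id)^{-1}(I(a,b)) = \{(x,0): a<g_1(x)\leq b\}\cup\{(x,1): a\leq g_1(x)<b\}.
\]
Using that $g_1$ is a continuous increasing bijection of $[0,1]$ onto $[g_1(0),g_1(1)]$, I would set $a^\ast=\inf\{x: g_1(x)\geq a\}$ and $b^\ast=\sup\{x: g_1(x)\leq b\}$ (with the preimage empty in the degenerate cases $a\geq g_1(1)$ or $b\leq g_1(0)$) and verify that the $0$-sheet contributes exactly $(a^\ast,b^\ast]$ and the $1$-sheet exactly $[a^\ast,b^\ast)$, so that the preimage equals $I(a^\ast,b^\ast)$, again a basic (hence open) set.

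The step I would be most careful about is this matching of the two sheet-endpoints, and it is exactly where injectivity of $g_1$ (i.e. strict monotonicity, as I read ``increasing'' here) is essential. The lower endpoint of the $0$-sheet part is governed by the condition $g_1(x)>a$ and that of the $1$-sheet part by $g_1(x)\geq a$; if $g_1$ were merely nondecreasing and had a flat piece at height $a$, then $\inf\{x:g_1(x)\geq a\}<\inf\{x:g_1(x)>a\}$, the $1$-sheet preimage would extend strictly below the $0$-sheet preimage, and the resulting set would fail to be open at the lowest such $1$-point. Strict monotonicity forces $\inf\{x:g_1(x)\geq a\}=\inf\{x:g_1(x)>a\}$ and the analogous equality at $b^\ast$, which is precisely what collapses the union back into a single basic set; granting this equality, the remaining verification is routine bookkeeping over the degenerate ranges.

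For (b) I would avoid recomputing preimages of the sets $Q(a,b;c,d)$ and instead conjugate by $h$. A short check on each of the four sheets $t=1,2,3,4$, using $h(Q(a,b;c,d))=I(a,b)\times I(c,d)$ and the definition \eqref{eq:homeo}, yields the intertwining relation
\[
h\circ(g_1\times g_2\times\id) = \bigl((g_1\times\id)\times(g_2\times\id)\bigr)\circ h,
\]
so that $g_1\times g_2\times\id$ is conjugate through $h$ to the product map $(g_1\times\id)\times(g_2\times\id)$ on $\I^2$. By part (a) each factor $g_i\times\id$ is continuous on $\I$, hence their product is continuous on $\I^2$ for the product topology; conjugating by the homeomorphism $h$ then gives continuity of $g_1\times g_2\times\id$ on $\Q$. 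This reduces (b) entirely to (a) together with Proposition~\ref{prop:QisI2} and the continuity of products of continuous maps.
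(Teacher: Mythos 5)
Your proof is correct. For part (a) you take essentially the paper's route --- compute the preimage of a basic set \(I(a,b)\) and show it is again a basic set --- but you are in fact more careful than the paper's own argument: the paper writes \(a'=g_1^{-1}(a)\), \(b'=g_1^{-1}(b)\) outright, tacitly assuming \(a\) and \(b\) lie in the range of \(g_1\), whereas your \(\inf\)/\(\sup\) formulation together with the degenerate cases \(a\geq g_1(1)\), \(b\leq g_1(0)\) covers the non-surjective situation. Your observation that \emph{strict} monotonicity is precisely what forces the \(0\)-sheet and \(1\)-sheet endpoints to coincide (and that a flat piece of a merely nondecreasing map would break openness of the preimage) is the right point to isolate; it is used implicitly but never examined in the paper. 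For part (b) you genuinely diverge: the paper repeats the direct computation, showing \((g_1\times g_2\times\id)^{-1}(Q(a,b;c,d))=Q(a',b';c',d')\), while you conjugate by the homeomorphism \(h\) of Proposition~\ref{prop:QisI2} and invoke continuity of products of continuous maps. Your intertwining identity \(h\circ(g_1\times g_2\times\id)=\bigl((g_1\times\id)\times(g_2\times\id)\bigr)\circ h\) is the corrected form of the relation the paper records in Remark~\ref{rem:homeo} after the lemma (where the right-hand \(\circ\, h\) is missing, so the two sides do not even have the same domain as written). The trade-off: the paper's computation is self-contained and uniform across (a) and (b); your reduction eliminates the four-sheet bookkeeping on \(\Q\) and makes (b) a formal consequence of (a) plus the product structure, at the cost of depending on Proposition~\ref{prop:QisI2}.
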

	\begin{proof}
		Note that the preimages via 
		increasing maps \(g_i\) 
		preserve shapes of intervals.
		Therefore, 
		\begin{align*}
			(g_1\times\id)^{-1}(I(a,b))
			= I(a',b'),
			\\
			(g_1\times g_2\times\id)^{-1}(Q(a,b;c,d)) 
			= Q(a',b';c',d'), 
		\end{align*}
		for \(a<b\) and \(c<d\) in \([0,1]\),
		where 
		\(a' = g_1^{-1}(a)\), 
		\(b' = g_1^{-1}(b)\),
		\(c' = g_2^{-1}(c)\),
		\(d' = g_2^{-1}(d)\).
	\end{proof}
	
	\begin{remark}\label{rem:homeo}
		The base \(I(a,b) \times I(c,d)\), 
		\(a<b\), \(c<d\), in \(\I^2\) 
		corresponds to the base 
		\eqref{eq:base} in \(\Q\) according 
		to the formula \eqref{eq:hQIxI}. 
		Similarly, in the context of 
		Lemma \ref{lem:continuity}, 
		the product of two maps
		\(g_1\times \id\) and \(g_2\times \id\)
		on \(\I\) stays in the natural correspondence
		with the map 
		\(g_1\times g_2\times \id\) on \(\Q\). 
		Namely,
		\begin{align*}
			h \circ (g_1\times g_2\times \id)
			= (g_1 \times \id) \times (g_2\times \id), 
		\end{align*}
		where \(h\) is given by \eqref{eq:homeo}.
	\end{remark}

	\section{IFSs on the split square}
	
	In this work by an 
	\notion{iterated function system} (shortly IFS)
	we understand  a finite collection 
	of continuous maps \(f_{i}:X\to X\), \(i\in I\), 
	acting on a Hausdorff topological space \(X\),
	e.g., \cite{BLesR2016chaos, Barrientos2017}.
	We write \(\F = (X; f_i:i\in I)\).
	Let \(\K(X)\) be a hyperspace of 
	nonempty compact subsets of \(X\).
	We endow \(\K(X)\) with 
	the \notion{Vietoris topology} whose
	subbasic open sets are of the following two forms:
	\begin{align*}
		V^{+} := \{S\in\K(X): S\subseteq V\}=\K(V),
		\\
		V^{-} := \{S\in\K(X): S\cap V\neq\emptyset\},
	\end{align*}
	where \(V\) runs over all open sets in \(X\),
	e.g., \cite{HuPa}. 
	Note that a sequence of compact sets 
	is convergent in the Vietoris sense if
	and only if it is convergent in the 
	\notion{upper Vietoris topology}, 
	which is generated by the sets \(V^{+}\),
	and the \notion{lower Vietoris topology},
	which is generated by the sets \(V^{-}\),
	e.g., \cite[Sec.10]{LowerUpper}.
	We associate with an IFS \(\F\) 
	the \notion{Hutchinson operator}
	\(F:\K(X)\to\K(X)\). 
	We will write \(F^n\) for the \(n\)-fold 
	composition of \(F\) and, for simplicity,
	\(F^n(x)\) instead of \(F^n(\{x\})\).
	
	A set \(A\in\K(X)\) is a \notion{strict attractor}
	of the IFS \(\F\) provided 
	there exists an open \(U\supseteq A\) such that
	for all \(K\in\K(U)\),
	\begin{align*}
		F^n(K)
		\underset{n\to\infty}{\longrightarrow} A
	\end{align*}
	in the Vietoris sense. The maximal open set $U$
	satisfying the above property is called 
	a~\notion{basin} of the attractor $A$. 
	If one can take \(U=X\), then \(A\) 
	is said to have a \notion{full basin}. 
	In particular, \(F(A)=A\) because \(F\) is 
	continuous in the Vietoris topology,
	cf. \cite{BLesContinuity}.
	It should be noted that a strict attractor \(A\) 
	is necessarily a separable space,
	e.g., \cite[Proposition 5]{BLesR2016chaos}.
	
	Further on, we use letters
	\(\F\), \(\G\) for the IFSs 
	and \(F\), \(G\) for the 
	corresponding Hutchinson operators.
	
	For example, the unit interval \([0,1]\) 
	is a strict attractor of the IFS 
	\((\R; g_i : i=0,1)\) 
	and its restriction
	\begin{align}
		\label{eq:IFSonI}
		\G := ([0,1]; g_i : i=0,1),
	\end{align} 
	where 
	\begin{align}
		\label{eq:IFSonImaps}
		g_i(x):= \frac{x+i}{2}.
	\end{align} 
	Similarly, the square \([0,1]^2\) 
	is a strict attractor of the IFS 
	\((\R^2; g_i\times g_j : i,j\in\{0,1\})\),
	with \(g_i\) given by \eqref{eq:IFSonImaps}. 
	
	It was shown in \cite[Example 6]{BLesR2016chaos} 
	that the split interval \(\I\) is 
	a strict attractor of a suitably defined 
	IFS on \(\I\). We modify this construction
	below by omitting one of the maps 
	in the original IFS. 
	
	\begin{example}\label{ex:IFSonII}
		Let \(\F = (\I; f_i: i=0,1)\), where
		\(f_i(x,t) := \left(\frac{x+i}{2}, t\right)\)
		for \((x,t)\in \I\). 
		Then \(\I\) is a strict attractor 
		of \(\F\) with full basin.
		
		Fix \((x,t)\in\I\). 
		The upper Vietoris convergence 
		\(F^n(x,t)\to\I\) is obvious.
		To show the lower Vietoris convergence,
		let \(V\subseteq \I\) be 
		an arbitrary nonempty open set;
		that is \(V\cap\I\neq\emptyset\).
		Then \(V\supseteq I(a,b)\)
		for some \(0\leq a<b\leq 1\).
		Denote \(U := (a,b)\).
		Since \([0,1]\) is the attractor 
		of the IFS \(\G\) defined in 
		\eqref{eq:IFSonI}, \eqref{eq:IFSonImaps}, 
		there exists \(n_0\in\N\) such that
		\(U\cap G^n(x)\neq\emptyset\)
		for \(n\geq n_0\). Therefore 
		\begin{align*}
			V\cap F^n(x,t) \supseteq 
			U\times\{t\} \cap G^n(x)\times \{t\} 
			\neq \emptyset
		\end{align*}
		for \(n\geq n_0\), 
		due to the relations: 
		\(F^n(x,t)=G^n(x)\times\{t\}\),
		and \(V\supseteq U\times\{t\}\).
		To finish the reasoning,
		one needs to ensure the Vietoris convergence 
		\(F^n(K)\to\I\) for every \(K\in\K(\I)\),
		not only for \(K=\{(x,t)\}\).
		This is immediate by the squeezing argument,
		since
		\begin{align*}
			F^n(x,t)\subseteq F^n(K)\subseteq \I,
		\end{align*}
		where \((x,t)\) is a point 
		picked anyhow from the set \(K\).
	\end{example}
	
	Let \(\F_1\) be an IFS on \(X_1\) 
	and let \(\F_2\) be an IFS on \(X_2\). 
	We define the \notion{product IFS} as follows
	\(\F_1\times \F_2 := 
	(X_1\times X_2; f_{1}\times f_{2} :
	f_1\in\F_1, f_2\in\F_2)\).
	This can be generalized to several factors.
	The projections 
	\(\pi_k: X_1\times X_2 \to X_k\), \(k=1,2\),
	are defined as \(\pi_k(x_1,x_2) := x_k\).
	
	It turns out that a finite product 
	of strict attractors is again 
	a strict attractor. 
	We formulate a suitable
	lemma for two factors.
	
	\begin{lemma}\label{lem:prodIFS}
		Let \(\F_1\) and \(\F_2\) be two IFSs 
		that are defined on \(X_1\) and \(X_2\),
		respectively, and which have strict attractors 
		\(A_1\) and \(A_2\) with basins \(U_1\) and \(U_2\). 
		Then \(A_1\times A_2\) is a strict attractor
		of the product system \(\F_1\times \F_2\)
		with a basin that contains \(U_1\times U_2\).
	\end{lemma}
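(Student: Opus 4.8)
The plan is to reduce everything to the two factor dynamics through the single algebraic identity
\begin{align*}
	F(K_1\times K_2) = F_1(K_1)\times F_2(K_2),
\end{align*}
where \(F_1\), \(F_2\), \(F\) denote the Hutchinson operators of \(\F_1\), \(\F_2\) and of the product system \(\F_1\times\F_2\), respectively. This follows at once from \((f_1\times f_2)(K_1\times K_2)=f_1(K_1)\times f_2(K_2)\) together with the distributive law \(\bigcup_{f_1,f_2} f_1(K_1)\times f_2(K_2) = \big(\bigcup_{f_1}f_1(K_1)\big)\times\big(\bigcup_{f_2}f_2(K_2)\big)\), the latter because the indices \(f_1\in\F_1\) and \(f_2\in\F_2\) range independently. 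Iterating gives \(F^n(K_1\times K_2)=F_1^n(K_1)\times F_2^n(K_2)\) for every \(n\). I would also record that each Hutchinson operator is monotone, i.e. \(K\subseteq K'\) implies \(F(K)\subseteq F(K')\), since \(F(K)=\bigcup_i f_i(K)\).

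Next I would set up a squeezing. The set \(A_1\times A_2\) is a nonempty compact subset of the open set \(U_1\times U_2\). Fix any \(K\in\K(U_1\times U_2)\) and pick a point \((x,y)\in K\). The projections \(\pi_1(K),\pi_2(K)\) are compact with \(\pi_k(K)\subseteq U_k\), so \(\pi_1(K)\in\K(U_1)\) and \(\pi_2(K)\in\K(U_2)\); moreover \(\{x\}\in\K(U_1)\) and \(\{y\}\in\K(U_2)\). From \(\{(x,y)\}\subseteq K\subseteq \pi_1(K)\times\pi_2(K)\) and monotonicity I obtain, via the identity above applied to the two product sets, the sandwich
\begin{align*}
	F_1^n(\{x\})\times F_2^n(\{y\}) \subseteq F^n(K)\subseteq F_1^n(\pi_1(K))\times F_2^n(\pi_2(K)).
\end{align*}
By the attractor hypothesis for the factors, \(F_1^n(\{x\})\to A_1\), \(F_1^n(\pi_1(K))\to A_1\), and likewise for the second factor.

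The key auxiliary fact I must establish is that forming products is continuous for Vietoris convergence: if \(B_1^n\to A_1\) in \(\K(X_1)\) and \(B_2^n\to A_2\) in \(\K(X_2)\), then \(B_1^n\times B_2^n\to A_1\times A_2\) in \(\K(X_1\times X_2)\). I would verify this by splitting Vietoris convergence into its upper and lower parts. For the lower part, an open \(V\) meeting \(A_1\times A_2\) contains a box \(W_1\times W_2\) meeting both \(A_1\) and \(A_2\); lower convergence of the factors eventually produces points of \(B_1^n\) in \(W_1\) and of \(B_2^n\) in \(W_2\), hence a point of \(B_1^n\times B_2^n\) in \(V\). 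The upper part is the main obstacle: a general open \(V\supseteq A_1\times A_2\) need not be a box, so I would invoke the generalized tube lemma to trap the compact product between boxes, i.e. to find open \(W_1\supseteq A_1\), \(W_2\supseteq A_2\) with \(W_1\times W_2\subseteq V\); upper convergence of the factors then forces \(B_1^n\times B_2^n\subseteq W_1\times W_2\subseteq V\) eventually.

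Finally I would close the argument with a squeezing lemma for Vietoris convergence: if \(L^n\subseteq M^n\subseteq N^n\) with \(L^n\to A\) and \(N^n\to A\), then \(M^n\to A\), because the lower part passes up from \(L^n\) and the upper part passes down from \(N^n\). Applying this to the sandwich above, with both outer sequences converging to \(A_1\times A_2\) by the product continuity just shown, yields \(F^n(K)\to A_1\times A_2\). Since \(K\in\K(U_1\times U_2)\) was arbitrary, \(A_1\times A_2\) is a strict attractor whose basin, being the maximal admissible open set, contains \(U_1\times U_2\).
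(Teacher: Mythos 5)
Your proof is correct and follows essentially the same route as the paper: the sandwich \(F_1^n(\{x\})\times F_2^n(\{y\})\subseteq F^n(K)\subseteq F_1^n(\pi_1(K))\times F_2^n(\pi_2(K))\) followed by a squeezing argument. The only difference is that you explicitly verify the two facts the paper delegates to the references (the compatibility of Vietoris convergence with products, via the tube lemma, and the squeezing lemma itself), which is a welcome but not essentially new elaboration.
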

	\begin{proof}
		For every \(K\in\K(U_1\times U_2)\), 
		\((x_1,x_2)\in K\) and \(n\in\N\) 
		we have 
		\begin{align*}
			F_1^n(x_1) \times F_2^n(x_2)
			=
			(F_1\times F_2)^n\,(x_1,x_2) 
			\subseteq 
			(F_1\times F_2)^n(K)
			\\ 
			\subseteq (F_1\times F_2)^n(\pi_1(K)\times\pi_2(K))
			\\	
			= F_1^n(\pi_1(K)) \times F_2^n(\pi_2(K)),
		\end{align*}
		Since \(x_k\in \pi_k(K)\subseteq U_k\), \(k=1,2\),
		by applying to the above inclusion
		the standard properties of Vietoris limits 
		(cf. \cite{HuPa,Illanes}) 
		and the squeezing argument, 
		we can finish the proof.
	\end{proof}

	The above considerations allow us to formulate
	
	\begin{theorem}
		The split square \(\Q\) is 
		a strict attractor of the IFS 
		\((\Q; f_{ij}: i,j\in\{0,1\})\), 
		\begin{align*}
			f_{ij}(x,y,t) := 
			\left(\frac{x+i}{2}, \frac{y+j}{2}, t\right),
			\quad (x,y,t)\in\Q.
		\end{align*}
	\end{theorem}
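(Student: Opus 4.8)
The plan is to reduce the statement to the one-dimensional Example \ref{ex:IFSonII} by exploiting the product structure of \(\Q\) recorded in Proposition \ref{prop:QisI2} and Remark \ref{rem:homeo}. First I would observe that each \(f_{ij}\) is exactly \(g_i\times g_j\times\id\) with \(g_i(x)=(x+i)/2\) increasing and continuous in the Euclidean sense; hence \(f_{ij}\) is continuous on \(\Q\) by Lemma \ref{lem:continuity}(b). That \(f_{ij}\) really maps \(\Q\) into \(\Q\) is cleanest to see after conjugation: by Remark \ref{rem:homeo} the homeomorphism \(h\) of Proposition \ref{prop:QisI2} satisfies \(h\circ f_{ij}=(g_i\times\id)\times(g_j\times\id)\), and each factor \(g_i\times\id\) is precisely the map \(f_i\) of Example \ref{ex:IFSonII}, which already sends \(\I\) into \(\I\).

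Consequently, under \(h\) the system \((\Q; f_{ij}: i,j\in\{0,1\})\) is carried onto the product IFS \(\F\times\F\) on \(\I^2\), where \(\F=(\I; f_i: i=0,1)\) is the IFS of Example \ref{ex:IFSonII}; concretely \(h\circ f_{ij}=(f_i\times f_j)\circ h\) for all \(i,j\). Now I would feed in the known attractors. Example \ref{ex:IFSonII} gives that \(\I\) is a strict attractor of \(\F\) with full basin, so I may take \(U_1=U_2=\I\) in Lemma \ref{lem:prodIFS}. Applying that lemma with \(\F_1=\F_2=\F\) yields that \(\I^2=\I\times\I\) is a strict attractor of \(\F\times\F\) whose basin contains \(\I\times\I=\I^2\); that is, \(\I^2\) is a strict attractor with full basin.

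Finally I would transport this conclusion back through \(h\). Since \(h\) is a homeomorphism, it induces a homeomorphism \(\K(\Q)\to\K(\I^2)\) of the Vietoris hyperspaces via \(K\mapsto h(K)\), and the conjugacy \(h\circ f_{ij}=(f_i\times f_j)\circ h\) forces the corresponding Hutchinson operators to be conjugate as well. Therefore the Vietoris convergence of \((\F\times\F)\)-orbits to \(\I^2\) transfers verbatim to convergence of the orbits under \((\Q; f_{ij})\) to \(\Q=h^{-1}(\I^2)\), so \(\Q\) is a strict attractor of \((\Q; f_{ij})\), again with full basin.

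The hard part will be the last step: making rigorous that a topological conjugacy of IFSs lifts to a conjugacy of the associated Hutchinson operators on the Vietoris hyperspaces, thereby preserving strict attractors together with their basins. This is a standard and robust principle, and it is the only ingredient beyond straightforward bookkeeping; everything else is a direct application of Lemma \ref{lem:continuity}, Remark \ref{rem:homeo}, Example \ref{ex:IFSonII}, and Lemma \ref{lem:prodIFS}.
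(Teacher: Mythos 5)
Your proposal is correct and takes essentially the same route as the paper: identify each \(f_{ij}\) with \(g_i\times g_j\times\id\), use Remark \ref{rem:homeo} to conjugate the system via \(h\) to the product of two copies of the IFS from Example \ref{ex:IFSonII}, and apply Lemma \ref{lem:prodIFS}. The only difference is that you make explicit the (routine) step of transporting the attractor through the homeomorphism \(h\), which the paper leaves implicit in the phrase ``can be identified with.''
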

	\begin{proof}
		Clearly \(f_{ij}= g_i\times g_j\times \id\),  
		\(i,j\in\{0,1\}\), where 
		the maps \(g_i\) are defined 
		in \eqref{eq:IFSonImaps}.
		According to Remark \ref{rem:homeo}, 
		our IFS can be identified with 
		the product of two copies of 
		the IFS from Example \ref{ex:IFSonII}.
		The application of Lemma \ref{lem:prodIFS} 
		completes the proof.
	\end{proof}

	\section{Split carpet}
	
	In this section we construct 
	the split carpet \(\SC\), an analogon of 
	the Sierpi\'{n}ski carpet \(\C\) 
	in the split square \(\Q\),
	and show that \(\SC\) 
	is an attractor of an IFS 
	on \(\Q\).
	
	Let us recall that 
	the \notion{Sierpi\'{n}ski carpet} \(\C\) 
	is the intersection 
	\(\C=\bigcap_{k\in\N} C_k\) 
	of the following sequence 
	of nonempty compact sets 
	\begin{align*}
		{C}_{0} := & 
		[0,1]^2, \\
		{C}_{k} := &
		{C}_{k-1} \setminus
		\bigcup_{0\leq l,m < 3^{k-1}}
		\left(\frac{3l+1}{3^k}, \frac{3l+2}{3^k}\right)
		\times
		\left(\frac{3m+1}{3^k}, \frac{3m+2}{3^k}\right),
		\; k\geq 1.
	\end{align*}
	Moreover, as is well known, 
	\(\C\) is the attractor with full basin 
	of the following IFS: 
	\begin{align}\label{eq:IFSforC}
		\mathcal{G} = 
		([0,1]^2; g_{ij}: 
		(i,j)
		\in\{0,1,2\}^2
		\setminus\{(1,1)\}),
	\end{align}
	where
	\(g_{ij}:[0,1]^2\to[0,1]^2\) and
	\begin{align}\label{eq:mapsCarpet}
		g_{ij}(x,y) := \left(\frac{x+i}{3}, 
		\frac{y+j}{3}\right)
		\quad\text{for}\; (x,y)\in[0,1]^2.
	\end{align} 
	
	Define inductively a nested sequence 
	of nonempty compact sets 
	\begin{align*}
		{SC}_{0} := & 
		Q(0,1;0,1), \\
		{SC}_{k} := &
		{SC}_{k-1} \setminus
		\bigcup_{0\leq l,m < 3^{k-1}}
		Q\left( \frac{3l+1}{3^k},
		\frac{3l+2}{3^k};
		\frac{3m+1}{3^k},
		\frac{3m+2}{3^k}\right),
		\; k\geq 1.
	\end{align*}
	The sets \(SC_{k}\) are formed by 
	cutting off from \(\Q\) split squares \(Q\)
	similarly as it is done in the construction
	of the Sierpi\'{n}ski carpet, 
	see Fig. \ref{fig:splitcarpet}.
	The \notion{split carpet} \(\SC\)
	is the intersection 
	\(\SC:= \bigcap_{k\in\N} {SC}_k\).
	
	\begin{figure}
		\begin{center}
			\includegraphics[scale=0.3]{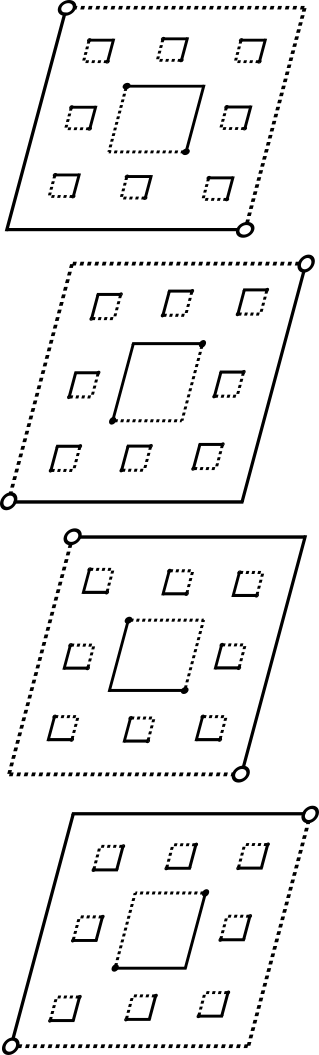}
		\end{center}
		\caption{The second generation 
			\({SC}_{2}\) of the sequence 
			\({SC}_k\) approximating 
			the split carpet \(\SC\).}
		\label{fig:splitcarpet}
	\end{figure}
	
	It should be remarked that \(\SC\)
	is non-metrizable because it contains
	a copy of the split interval:
	\(\SC \cap\big([0,1]\times\{0\} \times \{3,4\}\big)\);
	see Fig. \ref{fig:splitcarpet}.
	
	The following simple lemma
	expresses a strong affinity between 
	\(\SC\) and \(\C\).
	
	\begin{lemma}\label{lem:SC-holes}
		Let \(0\leq a<b\leq 1\), 
		\(0\leq c<d\leq 1\). If 
		\begin{align}\label{eq:SC-intersect}
			Q(a,b;c,d)\cap\SC\neq\emptyset,
		\end{align}
		then 
		\(\big((a,b)\times (c,d)\big)\cap\C\neq\emptyset\).
	\end{lemma}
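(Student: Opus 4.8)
The plan is to prove the contrapositive, showing that if the open Euclidean rectangle $(a,b)\times(c,d)$ misses the Sierpi\'nski carpet $\C$, then the basic split square $Q(a,b;c,d)$ misses the split carpet $\SC$. This is the natural direction because both $\SC$ and $\C$ are constructed by the same nested cutting procedure indexed by the middle-ninth squares, so the combinatorics of which holes are removed should transfer directly from one side to the other.

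First I would unwind the hypothesis $(a,b)\times(c,d)\cap\C=\emptyset$. Since $\C=\bigcap_k C_k$ and the sets $C_k$ are nested compact, and $(a,b)\times(c,d)$ is open, the missing of $\C$ means that the open rectangle is eventually covered by the removed open middle-ninth squares; more precisely, for each point of $(a,b)\times(c,d)$ there is some stage at which it lands in a deleted square, and I would want to extract a single finite stage $k$ and a finite collection of deleted squares whose union covers the closed rectangle $[a',b']\times[c',d']$ for suitable interior margins, or argue directly that the whole open rectangle is contained in one removed middle-third square at some stage. The cleanest formulation is: because the removed squares at level $k$ form an open set and $\C$ is their complement intersected with $[0,1]^2$, the rectangle meeting the complement of every removed-square-union would force it to meet $\C$ by a compactness/nesting argument. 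So disjointness from $\C$ gives that $(a,b)\times(c,d)$ is swallowed by finitely many, and in fact I expect by the geometry of the carpet by a single, removed square $\left(\frac{3l+1}{3^k},\frac{3l+2}{3^k}\right)\times\left(\frac{3m+1}{3^k},\frac{3m+2}{3^k}\right)$ at some level $k$.

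Next I would transfer this to the split side. The key observation is that the split square $Q(a',b';c',d')$ sits ``inside'' the closed Euclidean rectangle in the sense that the base points of each of the four sheets lie in $[a',b']\times[c',d']$, while the deleted split squares $Q\!\left(\frac{3l+1}{3^k},\frac{3l+2}{3^k};\frac{3m+1}{3^k},\frac{3m+2}{3^k}\right)$ cover all four-sheet points whose Euclidean projection lands in the corresponding open middle-ninth square together with the appropriate half-open boundary, according to the sheet label. So once the Euclidean rectangle is contained in a removed middle-ninth square, every point $(x,y,t)\in Q(a,b;c,d)$ has $(x,y)$ in that square (or on its correct half-open edge, matching the $t$-dependent inclusion/exclusion of endpoints built into the definition of $Q$), hence $(x,y,t)$ lies in the removed split square at stage $k$ and therefore is not in $SC_k\supseteq\SC$. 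This yields $Q(a,b;c,d)\cap\SC=\emptyset$, the contrapositive.

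The main obstacle I anticipate is the boundary bookkeeping: the split square at each sheet $t$ includes or excludes the endpoints $a,b,c,d$ in a sheet-dependent way, and the removed split squares likewise carry half-open faces, so I must verify that the half-open edges of the deleted $Q$ at level $k$ exactly absorb whatever half-open edges $Q(a,b;c,d)$ contributes on each of the four sheets. In other words, I would need to check that containment of the \emph{open} Euclidean rectangle in a removed open square is enough to force the \emph{split} rectangle (with its half-open edges) into the removed \emph{split} square, which requires matching the four endpoint conventions sheet by sheet using the intersection formula $Q(a,b;c,d)\cap Q(a',b';c',d')=Q(a\vee a',b\wedge b';c\vee c',d\wedge d')$ from the basis discussion. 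Once that edge-alignment is handled, the rest is the routine nesting-and-compactness extraction on the classical carpet side.
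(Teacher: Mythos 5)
Your overall plan coincides with the paper's: argue by contraposition, locate the Euclidean rectangle inside a single removed middle-ninth square, and then transfer that containment to the split side. The second half of your argument is in fact easier than you fear: once you know $(a,b)\times(c,d)\subseteq(a',b')\times(c',d')$ for a hole, you get the parameter inequalities $a'\leq a<b\leq b'$ and $c'\leq c<d\leq d'$, and then $Q(a,b;c,d)\subseteq Q(a',b';c',d')$ follows immediately from the definition of $Q$ (each of the four sheet pieces $[a,b)$, $(a,b]$, etc.\ sits inside the corresponding $[a',b')$ or $(a',b']$); no sheet-by-sheet edge matching via the intersection formula is needed. Since $Q(a',b';c',d')$ is one of the sets deleted in forming $SC_k$, this kills $Q(a,b;c,d)\cap\SC$ at once, exactly as in the paper.

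The genuine gap is in the step you flag with ``I expect by the geometry of the carpet'': you never actually prove that the rectangle lies in a \emph{single} removed square, and the machinery you propose does not deliver it. The compactness route fails at the outset, because the closure of $(a,b)\times(c,d)$ can meet $\C$ even when the open rectangle does not (take $(a,b)\times(c,d)=(1/3,2/3)^2$, whose entire boundary lies in $\C$), so no finite family of deleted open squares covers the closed rectangle; and covering a shrunken compact subrectangle by \emph{finitely many} holes is not enough, since the transfer to the split side genuinely requires a single hole containing the whole open rectangle (otherwise you cannot write down the inequalities $a'\leq a<b\leq b'$). The missing idea is the paper's one-line observation: $(a,b)\times(c,d)$ is \emph{connected}, the holes are open and pairwise disjoint-or-nested with boundaries contained in $\C$, so the maximal holes are precisely the connected components of $[0,1]^2\setminus\C$, and a connected set disjoint from $\C$ must lie in exactly one of them. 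With that inserted, your argument closes.
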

	\begin{proof}
		Suppose that \(U\cap \C=\emptyset\), 
		where \(U:= (a,b)\times(c,d)\).
		Since \(U\) is a connected set,
		there must be a hole in \(\C\) 
		which contains \(U\). That is 
		\(U\subseteq (a',b')\times(c',d')\)
		for some 
		\(a'= (3l+1)\cdot 3^{-k}\),
		\(b'= (3l+2)\cdot 3^{-k}\),
		\(c'= (3m+1)\cdot 3^{-k}\),
		\(d'= (3m+2)\cdot 3^{-k}\), where
		\(0\leq l,m < 3^{k-1}\) and
		\(k\geq 1\).
		Hence \(Q(a,b; c,d)\) is contained in one
		of the holes cut off from \(\Q\) 
		in the construction of \(\SC\).
		Namely,
		\(Q(a,b;c,d)\subseteq Q(a',b';c',d')\)
		as \(a'\leq a<b\leq b'\),
		\(c'\leq c<d\leq d'\).
		Since \(Q(a',b';c',d')\cap\SC =\emptyset\),
		this contradicts \eqref{eq:SC-intersect}. 
	\end{proof}

	Now we define an IFS on \(\Q\)
	whose attractor is \(\SC\).
	Let \(f_{ij}:\Q\to\Q\),
	\(i,j\in\{0,1,2\}\),
	\begin{align*}
		f_{ij}(x,y,t) := 
		\left(\frac{x+i}{3}, \frac{y+j}{3}, t\right)
		\quad \text{for}\; (x,y,t)\in\Q.
	\end{align*}
	
	\begin{theorem}
		The split carpet \(\SC\) is a strict attractor
		with full basin of the IFS \\
		\(\F=(\Q; f_{ij}: 
		(i,j)\in\{0,1,2\}^{2}\setminus\{(1,1)\})\).
	\end{theorem}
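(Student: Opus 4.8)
The plan is to show that $\SC$ is a strict attractor with full basin of $\F$ by verifying the two halves of Vietoris convergence $F^n(K)\to\SC$ for every $K\in\K(\Q)$, namely the upper and lower Vietoris parts separately. By the same squeezing argument used in Example~\ref{ex:IFSonII}, it suffices to prove convergence for a single point $K=\{(x,y,t)\}$; once we have $F^n(x,y,t)\to\SC$, the inclusion $F^n(x_0,y_0,t_0)\subseteq F^n(K)\subseteq F^n(\Q)$ (with $(x_0,y_0,t_0)$ an arbitrary point of $K$) and the upper bound from $F^n(\Q)\to\SC$ will pinch $F^n(K)$ to $\SC$. So the first step is to establish the two endpoints of this squeeze: $F^n(\SC)=\SC$ (self-similarity of the attractor) and $F^n(x,y,t)\to\SC$.

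First I would record the self-similarity $F(\SC)=\SC$. The maps $f_{ij}$ scale $\Q$ by the factor $\tfrac13$ into the nine subsquares of the $3\times 3$ grid, omitting the central one, exactly mirroring the Euclidean IFS $\G$ from~\eqref{eq:IFSforC},~\eqref{eq:mapsCarpet}. Because each $f_{ij}=g_i\times g_j\times\id$ with $g_i(x)=\tfrac{x+i}{3}$ increasing and continuous, Lemma~\ref{lem:continuity}(b) gives continuity of $f_{ij}$, and the same affine relations $f_{ij}(Q(0,1;0,1))=Q\!\left(\tfrac i3,\tfrac{i+1}{3};\tfrac j3,\tfrac{j+1}{3}\right)$ that drive the construction of the $SC_k$ show $F(SC_{k-1})=SC_k$, hence $F(\SC)=\SC$ by taking intersections. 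This handles the upper Vietoris direction of the squeeze: $F^n(\Q)=SC_n\searrow\SC$, so any basic open $V^+$ containing $\SC$ eventually contains all $F^n(\Q)$.

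The main work is the lower Vietoris convergence for a point, and this is where I expect the one genuinely split-square-specific obstacle. I want to show that for every nonempty basic open $Q(a,b;c,d)$ meeting $\SC$, we have $Q(a,b;c,d)\cap F^n(x,y,t)\neq\emptyset$ for all large $n$. The idea is to transport this to the Euclidean carpet via Lemma~\ref{lem:SC-holes}. If $Q(a,b;c,d)\cap\SC\neq\emptyset$, that lemma yields $(a,b)\times(c,d)\cap\C\neq\emptyset$, and since $\C$ is the full-basin attractor of $\G$, the orbit $G^n(x,y)$ enters the open rectangle $(a,b)\times(c,d)$ for all $n\ge n_0$. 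The relation $F^n(x,y,t)=G^n(x,y)\times\{t\}$ (valid because the third coordinate is fixed by every $f_{ij}$ and the first two coordinates evolve by precisely the Euclidean maps $g_{ij}$) then gives a point of $G^n(x,y)$ lying in $(a,b)\times(c,d)$, whose lift sits in $Q(a,b;c,d)$ provided the $t$-slice is compatible. The delicate point is the compatibility of the fourth coordinate: a point $(p,q,t)$ lies in $Q(a,b;c,d)$ only when $p,q$ satisfy the half-open inequalities dictated by $t$ (e.g. $a\le p<b$ for $t\in\{1,4\}$ but $a<p\le b$ for $t\in\{2,3\}$), so merely knowing $(p,q)\in(a,b)\times(c,d)$ already suffices, since the strict two-sided inequality $a<p<b$, $c<q<d$ places $(p,q,t)$ in $Q(a,b;c,d)$ regardless of $t$. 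Thus $Q(a,b;c,d)\cap F^n(x,y,t)\supseteq\bigl((a,b)\times(c,d)\cap G^n(x,y)\bigr)\times\{t\}\neq\emptyset$.

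Finally I would assemble the pieces. Every nonempty open $V\subseteq\Q$ contains some basic $Q(a,b;c,d)$ meeting $\SC$ (taking $V$ to meet $\SC$ and shrinking), so the previous paragraph gives $V\cap F^n(x,y,t)\neq\emptyset$ for all large $n$, which is the lower Vietoris convergence $F^n(x,y,t)\to\SC$. Combined with the squeeze $F^n(x_0,y_0,t_0)\subseteq F^n(K)\subseteq SC_n$ and $SC_n\searrow\SC$, we obtain $F^n(K)\to\SC$ in the Vietoris sense for every $K\in\K(\Q)$, so $\SC$ is a strict attractor of $\F$ with full basin $U=\Q$. The anticipated obstacle is purely the half-open bookkeeping in the split topology, which the strict inequalities from the Euclidean carpet conveniently resolve; the rest follows the template of Example~\ref{ex:IFSonII} and the product-attractor reasoning.
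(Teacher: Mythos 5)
Your proposal is correct and follows essentially the same route as the paper's proof: upper Vietoris convergence via $F^n(\Q)=SC_n\searrow\SC$, reduction to a single point by squeezing, and lower Vietoris convergence via Lemma~\ref{lem:SC-holes}, the identity $F^n(x,y,t)=G^n(x,y)\times\{t\}$, and the attractor property of the Euclidean carpet IFS $\mathcal{G}$. Your extra remarks (verifying $F(SC_{k-1})=SC_k$ and the observation that $(a,b)\times(c,d)\times\{t\}\subseteq Q(a,b;c,d)$ for every $t$) merely make explicit steps the paper leaves implicit.
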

	\begin{proof}
		Fix \(K\in\K(\Q)\). We shall show that
		\begin{align}\label{eq:FnKtoSC}
			F^n(K)\to\SC
		\end{align} 
		in the lower and upper Vietoris topology.
		
		First observe that 
		\(F^n(\Q)={SC}_n\to \SC\)
		in the Vietoris sense, because 
		\(\SC\) is a descending 
		intersection of nonempty and 
		compact sets \({SC}_n\), cf. 
		\cite[Sect.I.4, Theorems 4.4 and 4.6, Exercise 4.16]{Illanes}.
		In particular, we have the 
		upper Vietoris convergence 
		in \eqref{eq:FnKtoSC}.
		
		To verify the lower Vietoris convergence
		in \eqref{eq:FnKtoSC}, 
		by the squeezing argument, 
		it is enough to pick anyhow \((x,y,t)\in K\) 
		and show the lower Vietoris convergence
		of \(F^n(x,y,t)\to\SC\).
		
		Let \(V\subseteq \Q\) be an arbitrary open set
		with \(V\cap\SC\neq\emptyset\).
		Then \(Q(a,b;c,d)\cap\SC\neq\emptyset\)
		and \(Q(a,b;c,d)\subseteq V\)
		for some \(a<b\), \(c<d\).
		Hence, by Lemma \ref{lem:SC-holes}, 
		we have that \(U\cap\C\neq\emptyset\) 
		where \(U := (a,b)\times (c,d)\). 
		Since \(\C\) is the attractor 
		of the IFS \(\mathcal{G}\) 
		given by \eqref{eq:IFSforC}, \eqref{eq:mapsCarpet}, 
		there exists \(n_0\in\N\) such that
		\(U\cap G^n(x,y)\neq\emptyset\)
		for \(n\geq n_0\). Therefore 
		\begin{align*}
			V\cap F^n(x,y,t) \supseteq 
			U\times\{t\} \cap G^n(x,y)\times \{t\} 
			\neq \emptyset
		\end{align*}
		for \(n\geq n_0\),
		due to the relation 
		\(F^n(x,y,t)=G^n(x,y)\times\{t\}\).
	\end{proof}

	\section*{Acknowledgement}
	
	The first author would like 
	to thank Mateusz Maciejewski
	who dismissed one of the early ideas 
	that led to a trivial topology.

\end{document}